\theoremstyle{plain}
\numberwithin{equation}{section}
\newtheorem {YouHesa}{Theorem}[section]
\newtheorem {Sahni}[YouHesa]{Theorem}
\newtheorem {singhsingh}{Theorem}[section]
\newtheorem {singthuk}[singhsingh]{Lemma}
\newtheorem{hpdecomp}[singhsingh]{Lemma}
\newtheorem{sinagra2}[singhsingh]{Lemma}
\newtheorem {blaschke}{Lemma}[section]
\newtheorem {firstmain}[blaschke]{Theorem}
\newtheorem {corfirstmain}[blaschke]{Corollary}
\newtheorem {secmain}{Theorem}[section]
\newtheorem {thred}[secmain]{Theorem}
\begin{document}
\title[Sub Hilbert Spaces]{Sub-Hardy Hilbert spaces on the circle and torus}

\author{Ajay Kumar}
\address{Department of Mathematics, University of Delhi, Delhi, 110007}
\email{nbkdev@gmail.com}

\author{Niteesh Sahni}
\address{Department of Mathematics, Shiv Nadar University, Chithera,\newline Tehsil Dadri 203207, Uttar Pradesh (India)}
\email{niteeshsahni@gmail.com}

\author{Dinesh Singh}
\address{Department of Mathematics, University of Delhi, Delhi, 110007}
\email{dineshsingh1@gmail.com}

\date{October 24, 2013}
\subjclass[2000]{Primary 05C38, 15A15; Secondary 05A15, 15A18}

\begin{abstract}
In \cite{sahsin}, Sahni and Singh settled a problem posed in \cite{yous} by generalizing the main result of \cite{yous} with a simple proof in the setting of sub-Hilbert spaces in $H^2(\mathbb{T})$. In this paper, we extend the main result of \cite{sahsin} to the setting of the Banach spaces $H^p$, $1\le p\le\infty$ on the circle and the torus. 
\end{abstract}

\maketitle

\section{Introduction}
Let $H$ be a Hilbert space contained in the Hardy space $H^{2}$. The inner
products on $H$ and $H^{2}$ are denoted by $\left\langle \cdot ,\cdot
\right\rangle _{H}$ and $\left\langle \cdot ,\cdot \right\rangle _{2}$
respectively. In \cite{yous} Yousefi and Hesameddini assume $H$ to satisfy
the following axioms:
\begin{itemize}
\item[\bf{A1.}] If  there are four functions $f_1$, $f_2$, $g_1$, $g_2\in H$ such that $\left\langle f_{1},g_{1}\right\rangle _{H}=\left\langle
f_{2},g_{2}\right\rangle _{H}$, then we have  $\left\langle
f_{1},g_{1}\right\rangle _{2}=\left\langle f_{2},g_{2}\right\rangle
_{2}$.
\item[\bf{A2.}] If $\varphi $ is any inner function, then\\ $\left\langle\varphi f,\varphi g\right\rangle _{H}=\left\langle f,g\right\rangle _{H}$
for all $f,g\in H$.
\end{itemize}
\noindent and pose the open question \lq\lq {\em Is every Hilbert space $H$ satisfying axioms $A1$ and $A2$ of the form $\varphi H^2$ for some $\varphi\in H^\infty$, and there exists a constant $k$ such that $\left\langle f,g\right\rangle _{H}=k\left\langle f,g\right\rangle _{2}$ for every $f,g\in H$}\rq\rq?

Recently, Sahni and Singh \cite{sahsin} have settled the above open problem in the affirmative. In fact, they prove a far more general result by assuming axiom $(A2)$ to hold for a fixed Blaschke factor of order $n$ instead of every inner function. The main result of \cite{sahsin} runs as follows:
\begin{Sahni}[Theorem $3.1$, \cite{sahsin}]\label{sahni}
Let $H$ be a Hilbert space contained in $H^2$ such that:
\begin{itemize}
\item[(i)]$T_B\left( H\right) \subset H$ and $\left\langle Bf,Bg\right\rangle _{H}=\left\langle f,g\right\rangle _{H}$ for all $f,g\in H$;
\item[(ii)] If  $f_1,f_2,g_1,g_2\in H$ satisfy $\\
\left\langle f_1,g_1\right\rangle _{H}=\left\langle
f_2,g_2\right\rangle _{H}$, then we have \newline
$\left\langle f_1,g_1\right\rangle _{2}=\left\langle f_2,g_2\right\rangle _{2}$.
\end{itemize}
Then there exist unique $B-$inner functions $b_1$, $b_2$, $\ldots$, $b_r$ $(r\le n)$ such that $$H=b_1H^2(B)\oplus\cdots\oplus b_rH^2(B)$$ and 
the $B-$ matrix of the $r-$ tuple $(b_1,\ldots,b_r)$ is $B-$ inner.
\end{Sahni}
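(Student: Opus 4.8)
The plan is to use the consistency axiom (ii) to collapse the two inner products into a single one, and then to read the decomposition off the Beurling--Lax--Halmos description of the $T_B$-invariant subspaces of $H^2$; I assume $H\neq\{0\}$, else take $r=0$. \emph{First}, I would prove the two inner products are proportional on $H$. Fix a unit vector $e\in H$. For $g\in H\setminus\{0\}$ the functional $f\mapsto\langle f,g\rangle_H$ is nonzero, so the values $\langle f_1,g_1\rangle_H$ ($f_i,g_i\in H$) fill $\mathbb{C}$, and (ii) provides a well-defined $\psi:\mathbb{C}\to\mathbb{C}$ with $\langle f,g\rangle_2=\psi\bigl(\langle f,g\rangle_H\bigr)$. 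Realising prescribed $a,b\in\mathbb{C}$ as $\langle f_1,e\rangle_H,\langle f_2,e\rangle_H$ and applying $\psi$ to $\langle f_1+f_2,e\rangle_H$ and $\langle\lambda f_1,e\rangle_H$, sesquilinearity forces $\psi$ to be additive and homogeneous, hence $\mathbb{C}$-linear; so $\psi(x)=cx$ with $c=\psi(1)=\|e\|_2^2>0$. Rescaling to $c=1$, $H$ carries the $H^2$ inner product, is therefore $\|\cdot\|_2$-complete, i.e.\ a closed subspace of $H^2$, and $H$-orthogonality is $H^2$-orthogonality on $H$.

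\emph{Next}, I would set up the model for $T_B$. Multiplication by the order-$n$ Blaschke product $B$ is an isometry of $H^2$ with $\bigcap_k B^kH^2=\{0\}$; with $\mathcal{E}=H^2\ominus BH^2$ (dimension $n$, since $BH^2$ has codimension $n$), the Wold decomposition gives $H^2=\bigoplus_{k\ge0}B^k\mathcal{E}$, and $U:\sum_kB^ke_k\mapsto\sum_kz^ke_k$ is a unitary of $H^2$ onto the vector-valued Hardy space $H^2_{\mathcal{E}}$ intertwining $T_B$ with the shift $M_z$. By (i) and the previous step $H$ is a nonzero closed $T_B$-invariant subspace, so $UH$ is a nonzero $M_z$-invariant subspace of $H^2_{\mathcal{E}}$; Beurling--Lax--Halmos gives $UH=\Theta H^2_{\mathcal{E}'}$ for a Hilbert space $\mathcal{E}'$ and an inner operator function $\Theta:\mathbb{D}\to\mathcal{B}(\mathcal{E}',\mathcal{E})$ with $\Theta(\zeta)$ an isometry for a.e.\ $\zeta\in\mathbb{T}$, unique up to a constant unitary on $\mathcal{E}'$. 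Since an isometry into the $n$-dimensional $\mathcal{E}$ can only come from a space of dimension at most $n$, $r:=\dim\mathcal{E}'\le n$.

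\emph{Then}, I would split into columns and check uniqueness. Fix an orthonormal basis $\epsilon_1,\dots,\epsilon_r$ of $\mathcal{E}'$, put $\theta_i=\Theta\epsilon_i$ and $b_i=U^{-1}\theta_i\in H^2$. As $\Theta(\zeta)$ is an isometry a.e., $\|\theta_i(\zeta)\|_{\mathcal{E}}=1$ a.e., so $\{z^k\theta_i\}_{k\ge0,\,1\le i\le r}$ is orthonormal, each $\theta_iH^2$ is closed, and $\Theta H^2_{\mathcal{E}'}=\bigoplus_{i=1}^r\theta_iH^2$. Carrying this back through $U^{-1}$ (which intertwines $M_z$ with $T_B$), $\{B^kb_i\}_{k,i}$ is orthonormal in $H^2$, so each $b_i$ is $B$-inner, and
\[
H=U^{-1}\!\Bigl(\bigoplus_{i=1}^r\theta_iH^2\Bigr)=\bigoplus_{i=1}^r\overline{\operatorname{span}}\{B^kb_i:k\ge0\}=\bigoplus_{i=1}^rb_iH^2(B).
\]
Expanding each $b_i$ in the Wold basis and reading coefficients in a fixed orthonormal basis of $\mathcal{E}$ exhibits the $B$-matrix of $(b_1,\dots,b_r)$ as the matrix of $\Theta$ in those bases, which is $B$-inner because $\Theta$ is inner; and the uniqueness of the $b_i$ (equivalently, of the $r$-tuple up to a constant $r\times r$ unitary) is exactly the uniqueness in Beurling--Lax--Halmos for $\Theta$.

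\emph{The hard part} will not be the operator theory --- Wold and vector-valued Beurling--Lax--Halmos are classical --- but the translation in the last step: setting up a precise dictionary between the abstract data ($U^{-1}$ of the columns of $\Theta$, the space $\Theta H^2_{\mathcal{E}'}$, the matrix of $\Theta$) and the paper's notions of a $B$-inner function, the spaces $H^2(B)$ and $b_iH^2(B)$, and the $B$-matrix, and checking that the abstract orthogonal sum $\bigoplus_i\theta_iH^2$ really transcribes into $\bigoplus_i b_iH^2(B)$ with orthogonality matching the $H$-orthogonality of the statement. A minor point worth care in the first step is that $\psi$ must be forced genuinely $\mathbb{C}$-linear, not merely $\mathbb{Q}$-additive, which is why one realises all values through a single common slot $e$.
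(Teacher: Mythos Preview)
Your argument is correct and follows a genuinely different route from the paper's. The paper does not prove this statement directly (it is quoted from \cite{sahsin}), but the proof of Theorem~\ref{main1} in Case~1 specialises to $p=2$ to give the argument the authors have in mind, and that argument differs from yours in an essential way.

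Your key move is the first step: you exploit axiom~(ii) \emph{globally} to force $\langle\cdot,\cdot\rangle_2=c\,\langle\cdot,\cdot\rangle_H$ on all of $H$, so that $H$ is automatically a closed subspace of $H^2$ and you may invoke the vector-valued Beurling--Lax--Halmos theorem for the $T_B$-shift; the bound $r\le n$ and the $B$-innerness of the matrix then fall out of the operator-function $\Theta$. The paper never collapses the two inner products. Instead it runs the Wold decomposition inside the abstract Hilbert space $(H,\langle\cdot,\cdot\rangle_H)$, uses~(ii) only pointwise to transfer individual orthogonality relations to $H^2$, and then argues by hand that each wandering vector multiplies $H^2(B)$ into $H$ and hence lies in $H^\infty$ (via a Cauchy-sequence argument and the multiplier characterisation of Lemma~\ref{sinagra2}), and that the wandering subspace has dimension at most $n$ (via the determinant computation of Lemma~\ref{lemm3}). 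Your approach is slicker and more conceptual in the $H^2$ setting, at the price of importing Beurling--Lax--Halmos as a black box; the paper's hands-on approach is more elementary and, crucially, is designed to survive the passage to $H^p$ for $1\le p<2$, where the containment $H\subset H^2$ is not given a priori and your collapsing step would first have to make sense of $\langle f,g\rangle_2$ for arbitrary $f,g\in H$.
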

The main result of \cite{yous} is the following invariant subspace characterization for the operator $S$ which stands for multiplication by the coordinate function $z$ on $H$ (that is $S:H\longrightarrow H$ such that $S(f(z))=zf(z)$): 
\begin{YouHesa}[Theorem 3, \cite{yous}]\label{orig1}
Let $H$ be a Hilbert space contained in $H^{2}$ satisfying axioms $A1$ and $%
A2$. Let $M$ be a closed subspace of $H$ that is invariant under the
operator $S$. Further if the set of multipliers of $H$ coincide with $%
H^{\infty }$, then there exists a unique inner function $\varphi $ such that $M=\varphi H$.
\end{YouHesa}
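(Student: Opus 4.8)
The plan is to obtain the characterization in Theorem~\ref{orig1} as a fairly direct consequence of Theorem~\ref{sahni}, specialized to the Blaschke factor of order one $B(z)=z$, together with Beurling's theorem.

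First I would observe that the axioms A1 and A2 on $H$ imply the two hypotheses of Theorem~\ref{sahni} for this choice of $B$: applying A2 to the inner function $z$ gives at once that $S(H)=zH\subseteq H$ and $\langle zf,zg\rangle_H=\langle f,g\rangle_H$ for all $f,g\in H$ (this is (i) with $T_B=S$), while A1 is literally condition (ii). Assuming $H\neq\{0\}$ (the statement being vacuous otherwise), Theorem~\ref{sahni} with $n=1$ forces $r=1$ and yields an inner function $\varphi_0$ with $H=\varphi_0H^2$. Unwinding that decomposition --- which is precisely the affirmative answer to the Yousefi--Hesameddini question recalled in the Introduction --- also gives a constant $k>0$ with $\langle f,g\rangle_H=k\langle f,g\rangle_2$ for all $f,g\in H$. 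In particular the $H$-norm and the $H^2$-norm are proportional on $H$, so a subspace of $H$ is closed in $H$ if and only if it is closed in $H^2$.

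Next I would transfer $M$ into $H^2$. Since $|\varphi_0|=1$ a.e.\ on $\mathbb{T}$, multiplication by $\overline{\varphi_0}$ is an isometric isomorphism of $H=\varphi_0H^2$ onto $H^2$, so $N:=\overline{\varphi_0}\,M$ is a closed subspace of $H^2$; moreover $zN=\overline{\varphi_0}(zM)\subseteq\overline{\varphi_0}M=N$ because $M$ is $S$-invariant, so $N$ is invariant under the shift. Assuming also $M\neq\{0\}$ (implicit in the statement, since $\varphi H\neq\{0\}$ whenever $\varphi$ is inner and $H\neq\{0\}$), Beurling's theorem provides an inner function $\varphi$ with $N=\varphi H^2$, and hence
\[
M=\varphi_0N=\varphi_0\varphi H^2=\varphi\,(\varphi_0H^2)=\varphi H .
\]
Here $\varphi\in H^\infty$, which by hypothesis is the multiplier algebra of $H$ (and in any case $\varphi H=\varphi\varphi_0H^2\subseteq\varphi_0H^2=H$ directly), so $\varphi H$ is a genuine subspace of $H$; this settles existence. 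For uniqueness, if $\varphi_1H=\varphi_2H=M$ with $\varphi_1,\varphi_2$ inner, then $\varphi_1\varphi_0H^2=\varphi_2\varphi_0H^2$, and multiplication by $\overline{\varphi_0}$ gives $\varphi_1H^2=\varphi_2H^2$; the uniqueness half of Beurling's theorem then forces $\varphi_1$ and $\varphi_2$ to coincide up to a unimodular constant, which is the usual sense in which an inner function is unique.

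The point to emphasize is that no hard analysis survives once Theorem~\ref{sahni} is invoked --- the remaining argument is a routine application of Beurling's theorem. The only thing that requires care is the bookkeeping needed to verify that Theorem~\ref{sahni} really does degenerate, for $B=z$, to the statement ``$H=\varphi_0H^2$ with $\langle\cdot,\cdot\rangle_H$ a scalar multiple of $\langle\cdot,\cdot\rangle_2$''; that is, that for this $B$ a ``$B$-inner function'' is an ordinary inner function, ``$H^2(B)$'' is ``$H^2$'', the ``$B$-matrix'' condition becomes vacuous, and the direct sum collapses to a single summand --- together with the observation that the degenerate cases $H=\{0\}$ and $M=\{0\}$ are tacitly excluded by the statement.
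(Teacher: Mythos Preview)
Your argument is correct, but there is no proof in this paper to compare it against: Theorem~\ref{orig1} is quoted verbatim from \cite{yous} as background, and the present paper gives no proof of it. What the paper does say, in the paragraph immediately following the statement, is that \cite{sahsin} improved Theorem~\ref{orig1} by dropping the multiplier hypothesis, and that ``the proof relies on the solution of the above open problem which comes as a special case by taking $B(z)=z$ in Theorem~\ref{sahni}.'' Your proposal is precisely a fleshing-out of that remark: specialize Theorem~\ref{sahni} to $B(z)=z$ to obtain $H=\varphi_0 H^2$ with proportional inner products, then invoke Beurling's theorem on $\overline{\varphi_0}\,M$. So your approach matches the one the paper attributes to \cite{sahsin}, and your observation that the hypothesis $M(H)=H^\infty$ is never used confirms the paper's claim that this condition is redundant.

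One small point of bookkeeping: Theorem~\ref{sahni} as stated does not include the norm identity $\langle\cdot,\cdot\rangle_H=k\langle\cdot,\cdot\rangle_2$, so strictly speaking you are appealing to the fuller statement of the open-problem solution (or, equivalently, to the $n=1$, $p=2$ case of Theorem~\ref{main1}, which does record the norm formula). This is harmless, but worth flagging since you phrase it as ``unwinding that decomposition.''
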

Theorem \ref{orig1} has also been improved in \cite{sahsin} by showing that the condition $M(H)=H^\infty$ is redundant. The proof relies on the solution of the above open problem which comes as a special case by taking $B(z)=z$ in Theorem \ref{sahni}. 

In the present paper we extend Theorem \ref{sahni}, and also the invariant subspace characterizations of \cite{sahsin} and \cite{yous} to the context of $H^p$ spaces for all $p\ge 1$. 

\section{Preliminary Results}
Let $\mathbb{D}$ denote the open unit disk, and its boundary, the unit circle by $\mathbb{T}$. The Lebesgue space $L^{p}$ on the unit circle is a
collection of complex valued functions f on the unit circle such that $\int {|f|^{p}}dm$ is finite, where $dm$ is the normalized Lebesgue measure on $\mathbb{T}$.

The Hardy space $H^p$ is the following closed subspace of $L^p$: 
\begin{equation*}
\left\{ f\in L^p : \int{f z^n} dm=0 ~\forall ~n\ge1 \right\} .
\end{equation*}

For $1\le p<\infty$, $H^p$ is a Banach space under the norm 
\begin{equation*}
\|f\|_p=\left(\int{|f|^p} dm\right)^\frac{1}{p} .
\end{equation*}

However $H^\infty$ is a Banach space under the norm 
\begin{equation*}
\|f\|_\infty= \inf\{K: m\{z\in T: |f(z)|>K\}=0\} .
\end{equation*}

The Hardy space $H^{2}$ turns out to be a Hilbert space under the inner
product 
\begin{eqnarray*}
<f,g>_2&=&\int {f\overline{g}}dm.
\end{eqnarray*}
An inner function is any function $\varphi$ in $H^2$ that satisfies $\left\langle \varphi f,\varphi g \right\rangle_{2}=\left\langle f, g \right\rangle_{2}$ for every $f,g\in H^2$. Equivalently, this means that $\|\varphi f\|_{2}=\|f\|_{2}$ for every $f\in H^2$.
By a finite Blaschke product $B(z)$ we mean
$$\prod_{i=1}^n\dfrac{z-\alpha_i}{1-\overline{\alpha_i} z}$$ where each $\alpha_i\in\mathbb{D}$. Each finite Blaschke factor is an inner function. The isometric operator of multiplication by $B$ on $H^2$ shall be denoted by $T_B$. When $n=1$ and $\alpha_1=0$ then $B(z)=z$. For the rest of this paper we shall assume that $\alpha_1=0$. This shall not cause any loss in generality due to the conformal invariance of $H^2$.
By $H^2(B)$ we denote the closed linear span of $\{B^n:n=0,1,\ldots\}$ in $H^2$. A detailed account of $H^{p}$ spaces can be found in \cite{Duren}, \cite{garnett}, and \cite{hoffman}. 

We now record results that will be used in our derivations. Note that throughout this paper $B(z)$ shall denote an arbitrarily chosen but then fixed finite Blaschke factor with $B(0)=0$.  

In \cite{sinth}, an orthonormal basis of $H^2$ is constructed in terms of the Blaschke factor $B(z)$. The description of the basis is $\left\{e_{j,m}:0\le j\le n-1, m=0,1,2,\ldots\right\}$, where $e_{j,m}=\dfrac{\sqrt{1-|\alpha_{j+1}|^2}}{1-\overline{\alpha_{j+1}}z}$. As a consequence, we can write 
\begin{equation}\label{h2decomp}
H^2=e_{0,0}H^2(B)\oplus e_{1,0}H^2(B)\oplus\cdots\oplus e_{n-1,0}H^2(B)
\end{equation}

Now, to each $r-$ tuple $(\varphi_1,\ldots,\varphi_r)$ of functions in $H^2$ we associate a $n\times r$ matrix of $H^2$ functions called the $B-$ matrix of $(\varphi_1,\ldots,\varphi_r)$ defined by $$A=(\varphi_{ij}),\text{ }0\le i\le n-1,\text{ and }1\le j\le r$$ where
$$\varphi_j=\sum_{i=0}^{n-1}e_{i,0}\varphi_{ij},\text{ }1\le j\le r$$
We say that $A$ is $B-$ inner if $$(\overline{\varphi_{ji}})(\varphi_{ij})=(\delta_{st})$$ where $1\le s,t\le r$, and $\delta_{st}$ is the Kronecker delta.

The following lemma provides a necessary and sufficient condition for a $B$- matrix to be $B$- inner.

\begin{singthuk}[Lemma $3.9$, \cite{sinth}]\label{lemm}
The $B-$ matrix of the $r-$ tuple $(\varphi_1,\ldots,\varphi_r)$ of $H^\infty$ functions is $B-$ inner if and only if $\{B^m\varphi_i:1\le i\le r, m=0,1,2,\ldots\}$ is an orthonormal set in $H^2$.
\end{singthuk}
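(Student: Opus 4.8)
The plan is to untangle the two rigid structures present --- the orthogonal decomposition \eqref{h2decomp} and the fact that $B$, being inner with $B(0)=0$, is a measure-preserving transformation of $(\mathbb{T},dm)$ --- after which the assertion reduces to the uniqueness theorem for Fourier coefficients of an $L^1$ function. Write $\varphi_j=\sum_{i=0}^{n-1}e_{i,0}\varphi_{ij}$, $\varphi_{ij}\in H^2(B)$, as in the definition of the $B$-matrix $A=(\varphi_{ij})$, and put $\varphi_{ij}=\widetilde\varphi_{ij}\circ B$ with $\widetilde\varphi_{ij}\in H^2$ (possible since $B$ preserves $dm$, so $g\mapsto g\circ B$ is a linear isometry of $H^2$ onto $H^2(B)$; the measure-preservation itself follows by comparing the Fourier coefficients of the push-forward $B_{*}m$ with those of $m$). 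Because the basis of \cite{sinth} satisfies $e_{i,m}=e_{i,0}B^m$, multiplication by each $e_{i,0}$ is an isometry of $H^2(B)$ into $H^2$ carrying $\{B^m\}_{m\ge0}$ to $\{e_{i,m}\}_{m\ge0}$; combining this with \eqref{h2decomp} and the previous isometry identifies $H^2$ (with $T_B$) with $(H^2)^n$ (with coordinatewise multiplication by $z$) in such a way that $\varphi_j$ corresponds to the vector $\vec\varphi_j=(\widetilde\varphi_{0j},\dots,\widetilde\varphi_{n-1,j})$. Hence $\{B^m\varphi_i:1\le i\le r,\ m\ge0\}$ is orthonormal in $H^2$ if and only if $\{z^m\vec\varphi_i:1\le i\le r,\ m\ge0\}$ is orthonormal in $(H^2)^n$.

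Now I would compute, for all $s,t$ and $m,m'\ge0$,
\[
\langle z^m\vec\varphi_s,\,z^{m'}\vec\varphi_t\rangle_{(H^2)^n}=\sum_{i=0}^{n-1}\int z^{m-m'}\,\widetilde\varphi_{is}\,\overline{\widetilde\varphi_{it}}\;dm=\widehat{\gamma_{st}}\,(m'-m),
\]
where $\gamma_{st}(z):=\sum_{i=0}^{n-1}\widetilde\varphi_{is}(z)\,\overline{\widetilde\varphi_{it}(z)}\in L^1(\mathbb{T})$ is the $(s,t)$-entry of the pointwise Gram matrix of $\vec\varphi_1(z),\dots,\vec\varphi_r(z)$. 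Orthonormality of $\{z^m\vec\varphi_i\}$ says exactly that this equals $\delta_{st}\delta_{mm'}$ for all admissible indices; since $m$ and $m'$ range independently over the nonnegative integers, $m'-m$ runs through all of $\mathbb{Z}$, so the condition is equivalent to $\widehat{\gamma_{st}}(k)=\delta_{st}\delta_{k0}$ for all $k\in\mathbb{Z}$ and all $s,t$, hence --- by uniqueness of Fourier coefficients --- to $\gamma_{st}\equiv\delta_{st}$ a.e.\ for all $s,t$. Finally, $\gamma_{st}\circ B=\sum_{i=0}^{n-1}\varphi_{is}\,\overline{\varphi_{it}}$, and since $B$ preserves $dm$ an almost-everywhere identity passes freely through composition with $B$; so ($\delta_{st}$ being real, the conjugation is harmless) $\gamma_{st}\equiv\delta_{st}$ for all $s,t$ if and only if $\sum_{i=0}^{n-1}\overline{\varphi_{is}}\,\varphi_{it}=\delta_{st}$ for all $1\le s,t\le r$, i.e.\ if and only if $(\overline{\varphi_{ji}})(\varphi_{ij})=(\delta_{st})$ --- precisely the statement that $A$ is $B$-inner. (Only $\varphi_{ij}\in H^2(B)$ enters, so the hypothesis $\varphi_i\in H^\infty$ is not needed for this equivalence.)

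The calculations are routine once the set-up is in place; the two points that require care are (i) the bookkeeping in building the identification $H^2\cong(H^2)^n$ --- verifying $e_{i,m}=e_{i,0}B^m$, the mutual orthogonality of the summands in \eqref{h2decomp}, and the measure-preservation of $B$ (so that a.e.\ equalities may be pushed and pulled through composition with $B$) --- and (ii) the observation that allowing \emph{both} exponents $m,m'$ to be arbitrary nonnegative integers is exactly what forces $m'-m$ to exhaust $\mathbb{Z}$, so that orthonormality determines \emph{every} Fourier coefficient of $\gamma_{st}$, not merely the nonnegative ones (which is all one could extract from $\langle\varphi_s,B^{m'}\varphi_t\rangle_2$ alone). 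I expect (ii) to be the step most easily slipped past, but I do not anticipate a genuine obstacle.
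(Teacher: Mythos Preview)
The paper does not actually prove this lemma: it is quoted verbatim as Lemma~3.9 of \cite{sinth} among the preliminary results, with no argument supplied here, so there is no in-paper proof to compare your attempt against.

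That said, your argument is correct and self-contained. The two structural ingredients you isolate --- the unitary identification $H^2\cong (H^2)^n$ coming from the basis $\{e_{i,m}\}$ of \cite{sinth}, and the measure-preservation of $B$ on $(\mathbb{T},dm)$ (which you justify via Fourier coefficients of $B_*m$, using $B(0)=0$) --- are exactly what is needed, and once in place the computation $\langle z^m\vec\varphi_s,z^{m'}\vec\varphi_t\rangle=\widehat{\gamma_{st}}(m'-m)$ reduces the question to the uniqueness of Fourier coefficients for the $L^1$ functions $\gamma_{st}$. Your emphasis on the point that letting \emph{both} $m$ and $m'$ range over $\mathbb{Z}_{\ge0}$ forces $m'-m$ to sweep all of $\mathbb{Z}$ is well placed: this is precisely what converts orthonormality into the full Fourier-coefficient condition $\widehat{\gamma_{st}}(k)=\delta_{st}\delta_{k0}$ rather than a one-sided one. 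The passage of a.e.\ identities through composition with $B$ is justified because $B_*m=m$ gives $m(B^{-1}(E))=m(E)$ for every measurable $E$, so null sets pull back to null sets and conversely. Your closing observation that only $\varphi_i\in H^2$ (hence $\varphi_{ij}\in H^2(B)$ and $\gamma_{st}\in L^1$) is used, not $\varphi_i\in H^\infty$, is also correct.
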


A decomposition similar to equation (\ref{h2decomp}) is valid for $H^p$ spaces as well:
\begin{hpdecomp}[Lemma $2$, \cite{sahsin2}]\label{hpdecomp}
For $p\ge1$, we can write
\begin{equation*}
H^{p}=e_{0,0}H^{p}\left( B\right) \oplus e_{1,0}H^{p}\left( B\right)
\oplus \cdots \oplus e_{n-1,0}H^{p}\left( B\right),
\end{equation*}
where $H^p(B)$ stands for the closure (weak-star closure if $p=\infty$) of $span\{1,B,B^2,...\}$ in $H^p$, and $e_{j,m}$ are as defined above.
\end{hpdecomp}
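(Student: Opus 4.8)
The plan is to recast the lemma as an isomorphism statement. Consider the bounded linear map
\[
\Phi:\;H^{p}(B)\oplus\cdots\oplus H^{p}(B)\;\longrightarrow\;H^{p},
\qquad
\Phi(g_{0},\dots,g_{n-1})=\sum_{j=0}^{n-1}e_{j,0}\,g_{j},
\]
which is bounded since each $e_{j,0}\in H^{\infty}$; the assertion amounts to $\Phi$ being a bijection and hence, by the open mapping theorem, an isomorphism, so I would construct a bounded inverse, the main tool being the $n$-to-one fibre structure of $B$. Two preliminaries: since $B(0)=0$, the $0$-th Fourier coefficient of $B^{k}$ vanishes for every $k\ge1$, so the push-forward of normalized Lebesgue measure under $B$ is again $m$, whence $\chi\mapsto\chi\circ B$ is an isometric isomorphism of $H^{p}$ onto $H^{p}(B)$ for every $p$ (weak-$*$ closure when $p=\infty$); and on $\mathbb{T}$ the Blaschke product $B$ is a smooth $n$-fold covering with $|B'|$ bounded above and below, so the coarea identity
\[
\int_{\mathbb{T}}\Big(\sum_{z\in B^{-1}(w)}\phi(z)\Big)\,dm(w)=\int_{\mathbb{T}}\phi\,|B'|\,dm
\]
holds for every $\phi\in L^{1}(\mathbb{T})$, with the two sides comparable.

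For a regular value $w$ of $B$ --- every point of $\mathbb{T}$, and every point of $\mathbb{D}$ away from the finite zero set of $B'$ --- write $B^{-1}(w)=\{z_{1}(w),\dots,z_{n}(w)\}$ and put $M(w)=\bigl(e_{j,0}(z_{k}(w))\bigr)_{1\le k\le n,\ 0\le j\le n-1}$. By the explicit Malmquist form of the basis of \cite{sinth}, every element of $\operatorname{span}\{e_{0,0},\dots,e_{n-1,0}\}$ equals $Q(z)\big/\prod_{i=1}^{n}(1-\overline{\alpha_{i}}z)$ with $\deg Q\le n-1$ and denominator nonvanishing on $\overline{\mathbb{D}}$, so a nonzero such element cannot vanish at $n$ distinct points; hence $M(w)$ is invertible at every regular $w$, and by continuity and compactness $\sup_{w\in\mathbb{T}}\|M(w)^{-1}\|<\infty$. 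Now take $f\in H^{\infty}\subset H^{2}$ and invoke the $p=2$ decomposition (\ref{h2decomp}): $f=\sum_{j}e_{j,0}g_{j}$ with $g_{j}\in H^{2}(B)$, say $g_{j}=\tilde g_{j}\circ B$ and $\tilde g_{j}\in H^{2}$. Taking boundary values and reading this identity off on each fibre gives $\bigl(\tilde g_{j}^{*}(w)\bigr)_{j}=M(w)^{-1}\bigl(f^{*}(z_{k}(w))\bigr)_{k}$ for a.e.\ $w\in\mathbb{T}$, hence $|\tilde g_{j}^{*}(w)|\le C\sum_{k}|f^{*}(z_{k}(w))|$, and the coarea identity then yields $\tilde g_{j}\in L^{\infty}(\mathbb{T})$ together with the uniform bound $\|\tilde g_{j}\|_{p}\le C_{p}\|f\|_{p}$ valid for \emph{every} $p\in[1,\infty]$. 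In particular $\tilde g_{j}\in H^{2}\cap L^{\infty}=H^{\infty}$, so $g_{j}=\tilde g_{j}\circ B$ belongs to $H^{p}(B)$ with $\|g_{j}\|_{p}=\|\tilde g_{j}\|_{p}\le C_{p}\|f\|_{p}$; thus every $f\in H^{\infty}$ lies in the range of $\Phi$, with preimage controlled in every $H^{p}$-norm.

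For general $p\in[1,\infty)$ and $f\in H^{p}$, choose $f_{N}\in H^{\infty}$ with $f_{N}\to f$ in $H^{p}$ (dilations suffice); by the previous step and linearity of the $p=2$ decomposition, $f_{N}=\sum_{j}e_{j,0}g_{j}^{(N)}$ with $\|g_{j}^{(N)}-g_{j}^{(M)}\|_{p}\le C_{p}\|f_{N}-f_{M}\|_{p}$, so $g_{j}^{(N)}$ converges in the closed subspace $H^{p}(B)$ to some $g_{j}$, and in the limit $f=\sum_{j}e_{j,0}g_{j}$ with $\|g_{j}\|_{p}\le C_{p}\|f\|_{p}$ (the case $p=\infty$ being already covered). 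For injectivity: if $\sum_{j}e_{j,0}g_{j}=0$ with $g_{j}=\tilde g_{j}\circ B$, then evaluating boundary values fibrewise gives $M(w)\bigl(\tilde g_{j}^{*}(w)\bigr)_{j}=0$ a.e., so each $\tilde g_{j}\equiv0$. Hence $\Phi$ is a bounded bijection, i.e.\ an isomorphism, which is exactly $H^{p}=e_{0,0}H^{p}(B)\oplus\cdots\oplus e_{n-1,0}H^{p}(B)$ with each summand closed; for $p=\infty$ one uses weak-$*$ closures throughout (the push-forward identity survives, composition with $B$ is weak-$*$ continuous, and the Fej\'er means of $\tilde g_{j}$ converge weak-$*$, giving $\tilde g_{j}\circ B\in H^{\infty}(B)$). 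I expect the genuine obstacle to be the uniform estimate $\|\tilde g_{j}\|_{p}\le C_{p}\|f\|_{p}$ at the endpoints $p=1$ and $p=\infty$: the assignment $f\mapsto\tilde g_{j}$ can be presented as a Riesz projection relative to $B$, which is unbounded on $L^{1}$ and $L^{\infty}$, so one must instead exploit the explicit finite-fibre formula for $\tilde g_{j}^{*}$, for which the coarea identity is the decisive input.
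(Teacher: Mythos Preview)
The paper does not actually prove this lemma: it is quoted as Lemma~2 of \cite{sahsin2} and used throughout as a black box, so there is no in-paper argument for you to be compared against. What you have supplied is therefore a self-contained proof where the paper offers none.

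Your argument is correct. The two load-bearing steps are (a) the uniform invertibility over $w\in\mathbb{T}$ of the fibre matrix $M(w)=\bigl(e_{j,0}(z_k(w))\bigr)$, which follows from your degree count (a nonzero element of the Malmquist span has numerator of degree $\le n-1$ over a zero-free denominator, while the $n$ preimages of any $w\in\mathbb{T}$ are distinct because $zB'(z)/B(z)=\sum_i(1-|\alpha_i|^2)/|z-\alpha_i|^2>0$ on $\mathbb{T}$), and (b) the coarea identity, which converts the pointwise bound $|\tilde g_j(w)|\le C\sum_k|f(z_k(w))|$ into $\|\tilde g_j\|_p\le C_p\|f\|_p$ via $\int_{\mathbb{T}}\sum_k|f(z_k(w))|^p\,dm(w)=\int_{\mathbb{T}}|f|^p|B'|\,dm$. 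Your closing diagnosis is exactly right: the map $f\mapsto g_j$ is a $B$-adapted Riesz projection and would fail at $p=1,\infty$ if attacked through Fourier coefficients, whereas the explicit fibre formula sidesteps that obstruction. Two cosmetic points worth tightening: the labelling $z_1(w),\dots,z_n(w)$ is only local, so state the bound on $\|M(w)^{-1}\|$ as a bound on a symmetric function of the fibre (which it is); and for $p=\infty$ the identification $H^\infty(B)=\{\chi\circ B:\chi\in H^\infty\}$ merits one sentence (composition with $B$ is a weak-$*$ isometry of $H^\infty$ onto its range, in which polynomials in $B$ are weak-$*$ dense).
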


We also borrow the following characterization of multipliers of $L^2$ into $L^p$ established in the proof of Corollary $5.1$ of \cite{sinagra}:

\begin{sinagra2}\label{sinagra2}
Let $1\le p\le 2$. Suppose $g\in L^q$, for some $q$ such that $g$ multiplies $L^2$ into $L^p$, then $g\in L^{\frac{2p}{2-p}}$. In other words, the set of multipliers of $L^2$ into $L^p$ is the space $L^{\frac{2p}{2-p}}$.
\end{sinagra2} 

The space $BMO$ is the space of $L^1$ functions $f$ such that 
$$\|f\|_{\ast}:=\sup_I\dfrac{1}{|I|}\int_{I}\left|f-\dfrac{1}{|I|}\int_{I}f \right|<\infty$$
where $I$ is any subarc of the unit circle $\mathbb{T}$, and $|I|$ denotes the normalized Lebesgue measure of $I$.

$BMO$ is a Banach space under the norm:
$$\|f\|:=\|f\|_{\ast}+|f(0)|.$$
Now $BMOA=BMO\cap H^1$. It is well known that $BMOA\subset H^p$ for all $p<\infty$. Some important invariant subspaces of $BMOA$ have been characterized in \cite{sahsin3}.

\section{One variable results}
\begin {blaschke}\label{lemm3}
Let $H$ be a Hilbert space contained in $H^p$. Suppose there are $H^\infty$ functions $\varphi_1,\ldots,\varphi_r$ such that whenever $i\neq j$, $\varphi_i H^2(B)\perp \varphi_j H^2(B)$ in the inner product of $H$. Then $r\le n$.
\end {blaschke}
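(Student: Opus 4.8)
The statement is a dimension/counting bound: if $H \subseteq H^p$ is a Hilbert space and $\varphi_1,\dots,\varphi_r \in H^\infty$ have the property that the subspaces $\varphi_i H^2(B)$ are mutually orthogonal in the $H$-inner product, then $r \le n$, where $n$ is the degree of the fixed Blaschke factor $B$. The natural approach is to show that $r$ mutually orthogonal nonzero subspaces in $H$ force $r$ "independent directions" that must be accommodated inside the $n$-fold decomposition of $H^p$ given by Lemma~\ref{hpdecomp}. So the plan is to transfer the orthogonality information in $H$ into a statement about the $B$-matrix of $(\varphi_1,\dots,\varphi_r)$, which is an $n\times r$ matrix of $H^p$ functions, and then argue that $r > n$ is impossible for such a matrix.

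**Key steps.** First I would write each $\varphi_j = \sum_{i=0}^{n-1} e_{i,0}\varphi_{ij}$ using the decomposition~\eqref{h2decomp} (equivalently Lemma~\ref{hpdecomp}), producing the $n\times r$ $B$-matrix $A = (\varphi_{ij})$. The hypothesis gives $\langle \varphi_i f, \varphi_j g\rangle_H = 0$ for $i \ne j$ and all $f,g \in H^2(B)$; I want to extract from this that the columns of $A$ are, in an appropriate sense, linearly independent over the ring of $B$-functions, or at least that they span an $r$-dimensional space pointwise a.e.\ on $\mathbb{T}$. The cleanest route: suppose toward a contradiction that $r = n+1$ (it suffices to rule out $r > n$, and adding more functions only makes the hypothesis harder, so $r = n+1$ is the critical case). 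Then the $n\times(n+1)$ matrix $A$ has, at a.e.\ point of $\mathbb{T}$, rank at most $n$, so there is a nontrivial a.e.-pointwise linear relation among its columns. The task is to upgrade this pointwise relation to a genuine relation $\sum_j c_j(z)\varphi_j = 0$ with $c_j \in H^2(B)$ not all zero, which would contradict the mutual orthogonality (a nonzero element of, say, $\varphi_1 H^2(B)$ would then lie in the closed span of $\varphi_2 H^2(B),\dots$, violating orthogonality unless it is zero, but then $\varphi_1 H^2(B) = 0$, impossible since $\varphi_1 \in H^\infty$ is genuinely present).

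**The main obstacle.** The crux is exactly the passage from a pointwise-a.e.\ rank deficiency of $A(z)$ on $\mathbb{T}$ to an honest algebraic relation with coefficients in $H^2(B)$ (or $H^\infty(B)$). Pointwise one can solve for the kernel vector, but the resulting vector field need not have any analyticity or integrability. The standard device here is a Cramer's-rule / minors argument: form the $n\times n$ minors of $A$, which are $H^\infty$ functions (products and sums of the $\varphi_{ij}$, all in $H^\infty$); these give an explicit candidate relation among the columns whose coefficients are these minors, hence lie in $H^\infty \subseteq H^2$. One must then check that these minor-coefficients, which are genuinely in $H^2(B)$ after the bookkeeping (each $\varphi_{ij}$ is a $B$-function by construction of the decomposition, so their algebraic combinations stay in $H^2(B)$), are not all identically zero — and if they were all zero, that would already say $A$ has rank $< n$, and one descends to a smaller minor, eventually reaching a contradiction with $\varphi_j \ne 0$. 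Having produced a nonzero relation $\sum_j m_j \varphi_j = 0$ with $m_j \in H^2(B)$, isolating one term writes some $\varphi_{j_0} m_{j_0} \in \overline{\operatorname{span}}\bigl(\bigcup_{j\ne j_0}\varphi_j H^2(B)\bigr)$, and orthogonality in $H$ forces $\langle \varphi_{j_0}m_{j_0},\varphi_{j_0}m_{j_0}\rangle_H = 0$, i.e.\ $\varphi_{j_0}m_{j_0} = 0$ in $H$; since $H \subseteq H^p$ and $\varphi_{j_0} \not\equiv 0$, this forces $m_{j_0} \equiv 0$, and iterating kills all the $m_j$, the desired contradiction. I expect the delicate point to be ensuring that the minor argument produces a relation that is \emph{visibly} a relation with $H^2(B)$-coefficients and that the ``not all zero'' reduction terminates correctly.
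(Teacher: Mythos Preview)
Your proposal is correct and follows essentially the same route as the paper: decompose each $\varphi_j$ via Lemma~\ref{hpdecomp}, use a determinant/minor identity to produce a relation $\sum_j m_j\varphi_j=0$ with $m_j\in H^\infty(B)$, then exploit the $H$-orthogonality of the $\varphi_j H^2(B)$ to force each term $\varphi_j m_j$ to vanish, and descend on the minors until all components $\varphi_{ij}$ are zero. The paper presents exactly this argument in the illustrative case $n=2$, $r=3$ (forming the $(n{+}1)\times(n{+}1)$ matrix whose first row is $(\varphi_1,\dots,\varphi_{n+1})$ and whose remaining rows are the $B$-components, so that $\det A=0$ is automatic and the cofactor expansion along the first row is precisely your Cramer relation), and notes that the general case is analogous.
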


\begin{proof}
The proof is on similar lines as that of Lemma $4.3$ in \cite{sinth}. We only outline the important steps here. For the sake of simplicity let $n=2$ (the general case follows on similar lines).

If possible let $\varphi_1,\varphi_2,\varphi_3$ be non zero $H^\infty$ functions such that $\varphi_i H^2(B)\perp \varphi_j H^2(B)$ whenever $i\neq j$. By Lemma \ref{hpdecomp}, for each $j=1,2,3$, there exist functions $\varphi_{0,j}$ and $\varphi_{1,j}$ in $H^\infty$ such that:
$$\varphi_j=e_{0,0}\varphi_{0,j}+e_{1,0}\varphi_{1,j}.$$ 
Now we form the matrix
$$
A=
\left(
\begin{array}{ccc}
\varphi_1&\varphi_2&\varphi_3\\
\varphi_{01}&\varphi_{02}&\varphi_{03}\\
\varphi_{11}&\varphi_{12}&\varphi_{13}
\end{array}
\right)
$$
A straight forward calculation shows that
\begin{equation}\label{det}
\det A=\varphi_1\lambda_1-\varphi_2\lambda_2+\varphi_3\lambda_3=0,
\end{equation}
where $\lambda_1=\varphi_{02}\varphi_{13}-\varphi_{03}\varphi_{12}$, $\lambda_2=\varphi_{01}\varphi_{13}-\varphi_{03}\varphi_{11}$, and $\lambda_3=\varphi_{01}\varphi_{12}-\varphi_{02}\varphi_{11}$.

Taking the inner product on both sides of equation (\ref{det}) with $\varphi_1\lambda_1$, we get $\varphi_1\lambda_1=0$. This implies that $\lambda_1=0$. In a similar fashion, $\lambda_2=\lambda_3=0$.

Using $\lambda_3=0$, it is easy to show that $\varphi_1\varphi_{02}-\varphi_2\varphi_{01}=0$. So $\varphi_1\varphi_{02}$ and $\varphi_2\varphi_{01}$ belong to $\varphi_1 H^2(B)\cap\varphi_2 H^2(B)$. This implies that $\varphi_{02}=\varphi_{01}=0$. Similarly, all other elements in second and third rows of $A$ are zero. This means that each $\varphi_j=0$, which is a contradiction. 
\end{proof}

\begin{firstmain}\label{main1}
Let $H$ be a Hilbert space contained in $H^p$ $(1\le p\le\infty)$ such that:
\begin{itemize}
\item[(i)]$T_B\left( H\right) \subset H$ and $\left\langle T_B f,T_B g\right\rangle _{H}=\left\langle f,g\right\rangle _{H}$ for all $f,g\in H$;
\item[(ii)] If  $f_1,f_2,g_1,g_2\in H$ satisfy $\\
\left\langle f_1,g_1\right\rangle _{H}=\left\langle
f_2,g_2\right\rangle _{H}$, then we have \newline
$\left\langle f_1,g_1\right\rangle _{2}=\left\langle f_2,g_2\right\rangle _{2}$.
\end{itemize}
Then 
\begin{itemize}
\item[a.] for $1\le p\le 2$, there exist unique $H^\infty$ functions $b_1$, $b_2$, $\ldots$, $b_r$ $(r\le n)$ such that $$H=b_1H^2(B)\oplus\cdots\oplus b_rH^2(B)$$ and further there exist scalars $k_1,\ldots,k_r$ such that $\|b_1 f_1+\cdots+b_r f_r\|_H^2=k_1\|f_1\|_2^2+\cdots+k_r\|f_r\|_2^2$ for all $f_1,\ldots,f_r\in H^2(B)$.\\
\item[b.] for $p>2$, $H=[0]$.
\end{itemize}
\end{firstmain}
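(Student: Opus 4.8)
The plan is to mimic the Hilbert-space argument of Theorem \ref{sahni} from \cite{sahsin}, tracking carefully where the ambient space being $H^p$ rather than $H^2$ forces a change. First I would observe that axiom (ii) lets us define a genuine inner product (not merely a sesquilinear form) on $H$ by $\langle f,g\rangle := \langle f,g\rangle_2$ whenever this makes sense; more precisely, (ii) says the map $\langle\cdot,\cdot\rangle_H \mapsto \langle\cdot,\cdot\rangle_2$ is well-defined as a function of the $H$-inner-product value, so there is a function $\psi$ with $\langle f,g\rangle_2 = \psi(\langle f,g\rangle_H)$; the standard polarization/linearity manipulation (as in \cite{sahsin}) shows $\psi$ must be linear, hence $\langle f,g\rangle_2 = c\langle f,g\rangle_H$ for a fixed constant $c$ on each "piece." Since $T_B$ is an isometry on $H$ by (i), and $\langle T_B f, T_B g\rangle_2$ relates back through axiom (ii), one gets that $T_B$ is a norm-preserving (in the $\|\cdot\|_2$ sense applied to the comparison) map; the Wold-type decomposition of the isometry $T_B$ on $H$ then yields $H = \bigoplus_j b_j H^2(B)$ where the $b_j$ span the wandering subspace $H \ominus T_B H$. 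The containment $H \subset H^p$ and the identification of $H^2(B)$-structure come from Lemma \ref{hpdecomp}.

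The genuinely new input is controlling the multiplier $b_j$: a priori $b_j \in H \subset H^p$ only, whereas the target statement claims $b_j \in H^\infty$. This is where I expect the main obstacle to lie, and where Lemma \ref{sinagra2} and the $BMO$ material are meant to be used. The idea is that the identity $\|b_1 f_1 + \cdots + b_r f_r\|_H^2 = \sum k_i \|f_i\|_2^2$ (once established on $H^2(B)$-pieces via the Wold decomposition and axiom (ii)) says precisely that $f \mapsto b_i f$ is a bounded map from $H^2(B)$ into $H$, and composing with the comparison of norms, a bounded multiplier from $L^2$-type data into $L^p$. For $1 \le p \le 2$, Lemma \ref{sinagra2} then forces $b_i \in L^{2p/(2-p)}$; bootstrapping (the product $b_i \bar b_j$ lands in the right space, or iterating the multiplier property using that $H^2(B)$ is closed under multiplication by $B$ and the wandering vectors are themselves in $H$) pushes the integrability up. The cleanest route is: the orthogonality relations $b_i H^2(B) \perp b_j H^2(B)$ in $H$, transported by axiom (ii) to $H^2$, combined with Lemma \ref{singthuk} and Lemma \ref{lemm3} (which already gives $r \le n$), force the $B$-matrix of $(b_1,\dots,b_r)$ to be $B$-inner once we know the $b_i$ are bounded — so the whole argument reduces to the boundedness claim, which Lemma \ref{sinagra2} plus a $BMOA \subset H^p$ argument delivers.

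For part (b), the strategy is a direct contradiction. Suppose $H \ne [0]$ and $p > 2$. Then $H$ contains some nonzero $f$, and by the isometry property of $T_B$ together with axiom (ii) one again extracts a multiplier $b$ (a wandering vector) with $b \in H \subset H^p$, and the norm identity makes $h \mapsto bh$ a bounded operator from a dense subspace of $H^2$ into $H^p$; since $H^p \subset H^2$ for $p > 2$, this $b$ multiplies $L^2$ into itself boundedly on that subspace, forcing $b \in L^\infty$. But then the relation $\|bh\|_H^2 = c\|h\|_2^2$ combined with $H \subset H^p$ for $p > 2$ and the fact (Lemma \ref{sinagra2}, read in the limiting/dual sense) that a nonzero bounded multiplier into $L^p$ with $p>2$ from the full $L^2$ would have to be zero — the exponent $\tfrac{2p}{2-p}$ is negative, signalling that no nonzero such multiplier exists — gives $b = 0$, hence the wandering subspace is trivial, hence $H = \overline{\bigcap_n T_B^n H}$, and a further size/growth argument (using $T_B$ isometric and $H \subset H^p \subset H^2$) collapses this to $[0]$.

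The step I expect to be the true crux is upgrading $b_i \in H^p$ to $b_i \in H^\infty$: everything else is a transcription of the $p=2$ proof, but the multiplier-theoretic input (Lemma \ref{sinagra2} and the $BMO$ estimates) is exactly what is needed to close that gap, and getting the bootstrap on the integrability exponent to terminate at $\infty$ rather than stall at some finite $L^q$ is the delicate point.
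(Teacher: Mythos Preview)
Your overall architecture matches the paper's: Wold decomposition of the isometry $T_B$ on $H$, analysis of the wandering subspace $N = H \ominus T_B H$, and a multiplier argument via Lemma~\ref{sinagra2}. You also correctly isolate the crux as the upgrade from $b_i \in H^p$ to $b_i \in H^\infty$. But your proposed resolution --- a bootstrap on the integrability exponent, or an appeal to $BMOA$ --- does not work and is not what the paper does. The bootstrap stalls: Lemma~\ref{sinagra2} is stated only for $p \le 2$, so once it hands you $\varphi \in H^{2p/(2-p)}$ with $2p/(2-p) \ge 2$ you cannot re-apply it; and the $BMOA$ material appears only in the corollary, not here. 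The paper's actual step needs no iteration. One application of Lemma~\ref{sinagra2} already gives $\varphi \in H^2$; normalize to $\psi = \varphi/\|\varphi\|_2$. Axiom~(ii) transports the $H$-orthonormality of $\{B^m\varphi\}$ to $H^2$-orthogonality, so $\{B^m\psi\}$ is \emph{orthonormal in $H^2$}. Now re-run the very same Cauchy-sequence argument used to prove $\varphi H^2(B) \subset H$, but this time inside $H^2$: orthonormality makes $\{\psi f_m\}$ Cauchy in $H^2$, and the limit is identified with $\psi f$ via Fourier coefficients. Thus $\psi$ multiplies $H^2(B)$ --- hence, via the decomposition~(\ref{h2decomp}), all of $H^2$ --- into $H^2$, which forces $\psi \in H^\infty$ in one stroke. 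The point is not to iterate the exponent but to switch the target space from $H^p$ to $H^2$ once $\varphi$ is known to lie there.

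For part~(b), your heuristic ``$2p/(2-p)$ is negative, so no nonzero multiplier exists'' is not a proof, since Lemma~\ref{sinagra2} simply does not apply for $p>2$. The paper argues concretely: once a nonzero wandering vector $b$ is shown (by the same reasoning as in Case~1) to multiply $L^2$ into $L^p \subset L^2$, it multiplies $L^2$ into $L^2$ and hence $b \in H^\infty$. Pick $\epsilon > 0$ with $E = \{|b| > \epsilon\}$ of positive measure, pick $g \in L^2 \setminus L^p$, and set $h = \chi_E g \in L^2 \setminus L^p$. Then $bh \in L^p$ and $|b| \ge \epsilon$ on the support of $h$, so $\int |h|^p \le \epsilon^{-p} \int |bh|^p < \infty$, a contradiction. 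Thus $N = [0]$, and $H = [0]$ follows from the Wold decomposition because $B(0)=0$ forces $\bigcap_n T_B^n H = [0]$.
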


\begin{proof}
Case $1$. $1\le p\le 2$. Since $T_B$ is an isometry on $H$, so by Wold decomposition \cite{wold}, we can write 
\begin{equation}\label{wold}
H=N\oplus T_B(N)\oplus T_B^2(N)\oplus\cdots,
\end{equation} 
where the wandering subspace $N$ is given by $H\ominus T_B(H)$. 
Choose $\varphi\in N$ with $\|\varphi\|_H=1$. We can see from equation (\ref{wold}) that the set $\{B^m\varphi:m=0,1,2,\ldots\}$ is orthonormal in $H$. Now in view of $(ii)$, we conclude that $\{B^m\varphi:m=0,1,2,\ldots\}$ is an orthogonal set in $H^2$. 

We claim that $\varphi H^2(B)\subset H$. Let $f$ be an arbitrary element of $H^2(B)$. Then $f=\sum_{k=0}^\infty \alpha_k B^k$. Let $f_m=\sum_{k=0}^m \alpha_k B^k$ so that $\|f_m-f\|_2\rightarrow 0$ as $m\rightarrow\infty$. We make the following computation:
\begin{eqnarray*}
\|\varphi f_m\|_H^2&=& \left\|\sum_{k=0}^m \varphi \alpha_k B^k\right\|_H^2\\
&=& \sum_{k=0}^m \left\|\varphi \alpha_k B^k\right\|_H^2\\
&=& \sum_{k=0}^m |\alpha_k|^2\left\|B^k\varphi \right\|_H^2\\
&=& \sum_{k=0}^m |\alpha_k|^2\\
&=& \|f_m\|_2^2.
\end{eqnarray*}
Since $\{f_m\}$ is a Cauchy sequence in $H^2$ so $\{\varphi f_m\}$ is a Cauchy sequence in $H$. But $H$ is complete, so there exists $g\in H$ such that $\varphi f_m\rightarrow g$ in the norm of $H$. Note that for any positive integer $k$, we can write:
$$\varphi f_m=(\alpha_0\varphi+\alpha_1 B\varphi+\cdots+\alpha_k B^k\varphi)+B^{k+1}\varphi h_m,$$ where $h_m=\alpha_{k+1}+\alpha_{k+2}B+\cdots+\alpha_m B^{m-k-1}$. Again $\{\varphi f_m\}$ is a Cauchy sequence in $H$, so $\{\varphi h_m\}$ is a Cauchy sequence in $H$. Thus  $\varphi h_m\rightarrow h$ in $H$. Hence
\begin{equation}\label{eq1}
g=(\alpha_0\varphi+\alpha_1 B\varphi+\cdots+\alpha_k B^k\varphi)+B^{k+1}h.
\end{equation}
From Lemma \ref{hpdecomp}, we can write 
\begin{equation}\label{eq2}
\varphi=\sum_{k=0}^{n-1}\sum_{m=0}^\infty \beta_{k,m}e_{k,m}.
\end{equation}
Substituting (\ref{eq2}) in equation (\ref{eq1}) we see that the $(j,m)$th Fourier coefficient of $g$ is equal to the $(j,m)$th Fourier coefficient of $\varphi f$. Hence $g=\varphi f$ and this establishes our claim. This also proves that $\varphi$ multiplies $H^2(B)$ into $H^p$. Hence, in view of equation (\ref{h2decomp}) we see that $\varphi$ multiplies $H^2$ into $H^p$. Because $L^2=H^2\oplus\overline{zH^2}$, we conclude that $\varphi$ multiplies $L^2$ into $L^p$. Now by Lemma \ref{sinagra2}, $\varphi\in L^{\frac{2p}{2-p}}$. This implies that $\varphi\in H^{\frac{2p}{2-p}}$.

Observe that $\frac{2p}{2-p}\ge 2$. So $\varphi\in H^2$. Taking $\psi:=\dfrac{\varphi}{\|\varphi\|_2}$, we find that $\psi\in H^2$ and the set $\{B^m\psi:m=0,1,2,\ldots\}$ is orthonormal in $H^2$.

Proceeding in a fashion similar to the proof of $\varphi H^2(B)\subset H$ it can be shown that $\psi H^2(B)\subset H^2$. Again in view of equation (\ref{h2decomp}), we find that $\psi$ multiplies $H^2$ back into $H^2$. This implies that $\psi\in H^\infty$ and hence $\psi$ is $B$-inner. This also implies that $\varphi\in H^\infty$ and so the wandering subspace $N$ is contained in $H^\infty$. 

Next we show that $\dim N\le n$. Let $\varphi_1,\ldots,\varphi_r$ be mutually orthogonal unit vectors in $H$. By continuity of inner product (on $H$), it is easily seen that $\varphi_i H^2(B) \perp \varphi_j H^2(B)$ whenever $i\neq j$. Therefore by Lemma \ref{lemm3} we have $r\le n$. 

Once again by the Wold type decomposition in equation (\ref{wold}) we see that 
\begin{eqnarray*}
H&=&\varphi_1 H^2(B)\oplus\cdots\oplus\varphi_r H^2(B)\\
&=&\|\varphi_1\|_2 b_1 H^2(B)\oplus\cdots\oplus \|\varphi_r\|_2 b_r H^2(B)\\
&=&b_1 H^2(B)\oplus\cdots\oplus b_r H^2(B)
\end{eqnarray*}
where $b_i=\frac{\varphi_i}{\|\varphi_i\|_2}$ are $B$-inner functions. This establishes the first part of the theorem.
Lastly, we derive an expression for the norm as follows:
\begin{eqnarray*}
\|b_1 f_1+\cdots+b_r f_r\|_H^2 &=& \|b_1 f_1\|_2^2+\cdots+\|b_r f_r\|_2^2\\
&=& \left\|\frac{\varphi_1}{\|\varphi_1\|_2} f_1\right\|_2^2+\cdots+\left\|\frac{\varphi_r}{\|\varphi_r\|_2}f_r\right\|_2^2\\
&=& k_1 \|\varphi_1 f_1\|_2^2+\cdots+k_r \|\varphi_r f_r\|_2^2\\
&=& k_1 \|f_1\|_2^2+\cdots+k_r \|f_r\|_2^2
\end{eqnarray*}
where $k_i=\frac{1}{\|\varphi_i\|_2^2}$, and $f_i\in H^2(B)$ for all $i=1,\ldots,r$.

Case $2$. $p>2$. We shall show that the wandering subspace in equation (\ref{wold}) is zero. If possible, let $0\neq b\in N$. Then by similar arguments as above it can be seen that $b$ multiplies $L^2$ into $L^p$. In this case $L^p\subset L^2$. So $b\in L^\infty$. But $b\in H^p$, therefore $b\in H^\infty$. Choose an $\epsilon>0$ such that the set $E=\{t:|b(t)|>\epsilon\}$ has positive measure (such an $\epsilon$ exists because $b$ is not zero). 

Since $L^p\subsetneq L^2$, so there exists $g\in L^2$ and $g\not\in L^p$. Define $h=\chi_E g$ so that $h\in L^2$ and $h\not\in L^p$ (for if $h\in L^p$, then $\int |g|^p = \int_{E} |\chi_E g|^p<\infty$, which is not possible).

Since $b$ multiplies $L^2$ into $L^p$, we have $bh\in L^p$. This together with the fact that $b$ is invertible on $E$ yields $$\int |h|^p=\int_{E} |h|^p=\int_{E} \dfrac{|h|^p |b|^p}{|b|^p}\le \dfrac{1}{\epsilon^p}\int_{E} |bh|^p<\infty.$$
This is a contradiction which is a consequence of the assumption that $b\neq0$. Hence $N=[0]$.
\end{proof}

\begin{corfirstmain}
Let $H$ be a Hilbert space contained algebraically in $BMOA$ and satisfying the conditions of Theorem \ref{main1}. Then $H=[0]$. 
\end{corfirstmain}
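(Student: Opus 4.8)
The plan is to reduce the statement instantly to Case~2 of Theorem~\ref{main1}. The one ingredient needed is the embedding recorded in Section~2: $BMOA\subset H^{p}$ for every $p<\infty$, i.e. $BMOA\subseteq\bigcap_{p<\infty}H^{p}$. So the first step is to fix any exponent strictly between $2$ and $\infty$ — say $p=3$ — and observe that the hypothesis ``$H$ is contained algebraically in $BMOA$'' gives the algebraic inclusion $H\subseteq BMOA\subseteq H^{3}$. In particular $H\subseteq H^{3}\subset H^{2}$, so the pairing $\langle\cdot,\cdot\rangle_{2}$ appearing in condition~(ii) of Theorem~\ref{main1} is defined on all of $H$, and conditions~(i) and~(ii) are inherited verbatim from the hypothesis ``$H$ satisfies the conditions of Theorem~\ref{main1}''.

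The second (and last) step is to apply part~(b) of Theorem~\ref{main1} with this value $p=3>2$: it yields at once $H=[0]$.

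The only place that merits a moment's attention is the sense of the containment: Theorem~\ref{main1} is phrased for a Hilbert space ``contained in $H^{p}$'' without postulating continuity of the inclusion, and a glance at the proof of Case~2 shows that it uses only that each element of $H$ lies in $H^{p}$ as a function; this is exactly what the algebraic inclusion $H\subseteq BMOA\subseteq H^{3}$ supplies. Hence there is no real obstacle here — the corollary is essentially a restatement of Theorem~\ref{main1}(b) once one recalls $BMOA\hookrightarrow\bigcap_{p<\infty}H^{p}$ — and, if desired, one notes that the argument applies for every $p\in(2,\infty)$, a single choice being enough.
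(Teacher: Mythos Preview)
Your proof is correct and follows exactly the same route as the paper's own argument: use the inclusion $BMOA\subset H^{p}$ for all finite $p$ to place $H$ inside some $H^{p}$ with $p>2$, and then invoke part~(b) of Theorem~\ref{main1}. The paper states this in two lines without fixing a particular $p$; your choice of $p=3$ and the remark about only algebraic containment being needed are harmless elaborations of the same idea.
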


\begin{proof}
Since $BMOA\subset H^p$ for every finite $p$. So $H\subset H^p$ for all $p>2$. Hence, by Theorem \ref{main1}, we have $H=[0]$.
\end{proof}

\section{Two variable results}
First we give some preliminaries about two variable Hardy spaces. The torus is the cartesian product of two unit circles in the complex plane, and is denoted by $\mathbb{T}^2$. For $1\le p<\infty$, the Hardy space $H^p(\mathbb{T}^2)$ is the following closed subspace of $L^p(\mathbb{T}^2)$:
$$\left\{ f\in L^p(\mathbb{T}^2): \int_{\mathbb{T}^2} f(z,w) z^m w^n dm=0 \text{ whenever }m<0 \text{ or } n<0\right\}$$
where $dm$ is the normalized Lebesgue measure on the torus.\\

{\em Note: For the sake of convenience we shall write $H^p(\mathbb{T}^2)$ as $H^p$ hereafter in this section.}\\

The Hardy space $H^2$ is an Hilbert space with respect to the inner product:
$$<f,g>_2:=\int_{\mathbb{T}^2} f \bar{g} dm.$$

The collection $\{z^m w^n: m,n=0,1,\ldots \}$ is an orthonormal basis for $H^2$. A function $\varphi\in H^2$ is called an inner function if $\{z^m w^n \varphi: m,n=0,1,\ldots \}$ is an orthonormal set in $H^2$. Two operators $A$ and $B$ on a Hilbert space $H$ are called doubly commuting if $A^{\ast}$ commutes with $B$, or equivalently $A$ commutes with $B^{\ast}$.

The multiplication by the coordinate functions $z$ and $w$ will be denoted by $S_z$ and $S_w$ respectively. The operators $S_z$ and $S_w$ act as shifts on any Hilbert space $H$ contained in $H^p$ because $\cap_{n=0}^{\infty} S_z^n (H)=[0]$ and $\cap_{n=0}^{\infty} S_w^n (H)=[0]$.

We shall use the following result proved in \cite{red3} to prove the main result of this section:

\begin{thred}\label{orig}
Let $M$ be a Hilbert space contained in $H^p$, invariant under $S_z$ and $S_w$, and if $S_z$, $S_w$ are doubly commuting isometries on $M$, then  $$M=b H^2.$$
The function $b$ has the property that 
\begin{itemize}
\item[(i)] If $1\le p\le 2$ then $b\in H^{\frac{2p}{2-p}}$.
\item[(ii)] If $p>2$ then $b=0$.
\end{itemize}
Further, $\|b f\|_M=\|f\|_2$ for all $f\in H^2$.  
\end{thred}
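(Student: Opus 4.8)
The plan is to carry the proof of Theorem \ref{main1} over from the single isometry $T_B$ to the doubly commuting pair $S_z,S_w$, with the trivial coordinate decomposition $H^2(\mathbb{T}^2)=1\cdot H^2(\mathbb{T}^2)$ playing the role that equation (\ref{h2decomp}) plays there. The first step is a Wold-type decomposition. Since the paper records that $\bigcap_{k\ge 0}S_z^k(M)=\bigcap_{k\ge 0}S_w^k(M)=[0]$, the isometries $S_z,S_w$ on $M$ are pure; being also doubly commuting, they admit a joint Wold decomposition, giving
\begin{equation*}
M=\bigoplus_{m,n\ge 0}S_z^m S_w^n(W),\qquad W=\bigl(M\ominus S_z(M)\bigr)\cap\bigl(M\ominus S_w(M)\bigr),
\end{equation*}
an orthogonal sum in the inner product of $M$. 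Consequently, for any unit vector $b\in W$ the family $\{z^m w^n b:m,n\ge 0\}$ is orthonormal in $M$, and, transferring the $M$-inner product to that of $H^2$ as in condition $(ii)$ of Theorem \ref{main1}, it is an orthogonal family in $H^2(\mathbb{T}^2)$ as well.

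Fix such a $b$. I would next show, exactly as in Case $1$ of Theorem \ref{main1}, that $b$ multiplies $H^2(\mathbb{T}^2)$ into $H^p(\mathbb{T}^2)$ and that $\|bf\|_M=\|f\|_2$ for every $f\in H^2(\mathbb{T}^2)$: approximate $f$ in the $H^2$-norm by polynomials $f_k$ in $z$ and $w$; since $\{z^m w^n b\}$ is orthonormal in $M$ and the monomials are orthonormal in $H^2$, one gets $\|bf_k\|_M^2=\|f_k\|_2^2$, so $\{bf_k\}$ is Cauchy in $M$, hence convergent there, and a comparison of Fourier coefficients identifies the limit with $bf$. Thus $bH^2(\mathbb{T}^2)\subseteq M$, so $b$ multiplies $H^2(\mathbb{T}^2)$ into $H^p(\mathbb{T}^2)$; as in the one-variable argument this upgrades to $b$ multiplying $L^2(\mathbb{T}^2)$ into $L^p(\mathbb{T}^2)$, whence the two-variable form of Lemma \ref{sinagra2} gives $b\in L^{\frac{2p}{2-p}}$, so $b\in H^{\frac{2p}{2-p}}$. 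This is assertion $(i)$; for $p=2$ it merely says $b\in H^2$, which is automatic.

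It remains to see that $\dim W\le 1$, for then the previous step yields $M=bH^2(\mathbb{T}^2)$ together with the norm identity $\|bf\|_M=\|f\|_2$. Suppose $b_1,b_2\in W$ were orthonormal. By continuity of the $M$-inner product together with condition $(ii)$ of Theorem \ref{main1}, the subspaces $b_1H^2(\mathbb{T}^2)$ and $b_2H^2(\mathbb{T}^2)$ are orthogonal in $H^2(\mathbb{T}^2)$; pairing $z^{m}w^{n}b_1$ against $z^{m'}w^{n'}b_2$ for all $m,n,m',n'\ge 0$ forces every Fourier coefficient of $b_1\overline{b_2}\in L^1(\mathbb{T}^2)$ to vanish, so $b_1\overline{b_2}=0$ a.e. Since $|b_1 b_2|=|b_1\overline{b_2}|$, the product $b_1 b_2\in H^1(\mathbb{T}^2)$ vanishes a.e. on $\mathbb{T}^2$, hence the corresponding holomorphic function on the bidisc is identically zero, which forces $b_1\equiv 0$ or $b_2\equiv 0$---contradicting that both are non-zero. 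Hence $\dim W\le 1$, which settles the case $1\le p\le 2$. For $p>2$ one has $L^p(\mathbb{T}^2)\subseteq L^2(\mathbb{T}^2)$, and the argument of Case $2$ of Theorem \ref{main1} goes through unchanged: a non-zero $b\in W$ would multiply $L^2$ into $L^p\subseteq L^2$, and choosing $\varepsilon>0$ with $\{|b|>\varepsilon\}$ of positive measure and some $g\in L^2\setminus L^p$ produces $L^2\subseteq L^p$, which is false; therefore $W=[0]$, i.e. $M=[0]$ and $b=0$.

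The step I expect to be the main obstacle is the first one, establishing the joint Wold decomposition in the form stated: unlike the single-isometry case it rests essentially on the doubly-commuting hypothesis, and one must also verify that the abstract $M$-convergence used in the multiplier step is compatible with convergence of Fourier coefficients (equivalently, that $M$ sits inside $H^p$ in a manner that makes the comparison of inner products usable). Once these are in hand, the remainder is a faithful transcription of the proof of Theorem \ref{main1}, and the torus version of the dimension count is in fact simpler than Lemma \ref{lemm3}, since orthogonality of $b_1H^2$ and $b_2H^2$ collapses at once to $b_1\overline{b_2}=0$.
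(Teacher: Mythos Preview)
Your argument repeatedly invokes ``condition $(ii)$ of Theorem \ref{main1}'' (equivalently, property $(P)$), but that hypothesis is \emph{not} part of the statement of Theorem \ref{orig}. The theorem assumes only that $M\subset H^p$ is a Hilbert space on which $S_z,S_w$ act as doubly commuting isometries; no compatibility whatsoever is postulated between $\langle\,\cdot\,,\,\cdot\,\rangle_M$ and $\langle\,\cdot\,,\,\cdot\,\rangle_2$. Consequently your passage from ``$\{z^m w^n b\}$ orthonormal in $M$'' to ``orthogonal in $H^2(\mathbb{T}^2)$'' is unjustified, and the entire argument for $\dim W\le 1$ collapses: without property $(P)$ you cannot conclude that $b_1H^2$ and $b_2H^2$ are orthogonal in $H^2$, so the step $b_1\overline{b_2}=0$ a.e.\ has no foundation. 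What you have actually written is (close to) a proof of Theorem \ref{main2}, not of Theorem \ref{orig}.

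Note also that the paper does not prove Theorem \ref{orig} at all: it is quoted from Redett \cite{red3} and used as a black box. Redett's proof proceeds by de~Branges--type techniques rather than by transferring orthogonality to $H^2$, and in particular his bound $\dim W\le 1$ does not rely on any axiom linking the two inner products. If you want a self-contained argument, you must find an independent reason---available under the stated hypotheses alone---why the wandering subspace is at most one-dimensional; the orthogonality trick you propose is simply unavailable here.
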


The main result of this section is:

\begin{secmain}\label{main2}
Let $H$ be a Hilbert space contained in $H^p$, invariant under $S_z$ and $S_w$, and $S_z$, $S_w$ are doubly commuting isometries on $H$. Further, if the inner product on $H$ satisfies the condition:  
\begin{itemize}
\item[(P)] If  $f_1,f_2,g_1,g_2\in H$ such that $
\left\langle f_1,g_1\right\rangle _{H}=\left\langle
f_2,g_2\right\rangle _{H}$,\\ then
$\left\langle f_1,g_1\right\rangle _{2}=\left\langle f_2,g_2\right\rangle _{2}$.
\end{itemize}
Then 
\begin{itemize}
\item[(i)] for $1\le p\le 2$, there exists a unique inner function  $b\in H^\infty$ such that $$H=bH^2$$ and there exists a scalar $k$ such that $\|b f\|_H=k\|f\|_2$ for all $f\in H^2$.
\item[(ii)] for $p>2$, $H=[0]$.
\end{itemize}
\end{secmain}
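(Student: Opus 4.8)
The strategy is to let Theorem \ref{orig} carry out all the structural work and to use hypothesis (P) only to upgrade the symbol $b$ it produces to an inner function lying in $H^\infty$, much as axiom (ii) was used in the proof of Theorem \ref{main1}. The case $p>2$ is then immediate: the hypotheses on $H$ are exactly those of Theorem \ref{orig}, so $H=bH^2$ with $b=0$, that is $H=[0]$, and (P) is not even needed.

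Suppose now $1\le p\le 2$; we may assume $H\neq[0]$, the zero space being trivial. Applying Theorem \ref{orig} to $H$ produces $b$ with $H=bH^2$, with $\|bf\|_H=\|f\|_2$ for every $f\in H^2$, and with $b\in H^{\frac{2p}{2-p}}$ (read as $H^\infty$ when $p=2$); since $\frac{2p}{2-p}\ge 2$ this gives $b\in H^2$, in particular $\|b\|_2<\infty$, while $\|b\cdot 1\|_H=1$. Polarizing the isometry identity yields $\langle bf,bg\rangle_H=\langle f,g\rangle_2$ for all $f,g\in H^2$, so that $\{b\,z^mw^n:m,n\ge 0\}$ is an orthonormal set in $H$.

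Now I would bring in (P). Write $\mathcal P$ for the polynomials in $z,w$; on the dense subspace $b\mathcal P\subset H$ the form $\langle\cdot,\cdot\rangle_2$ is genuinely defined, since for $u,v\in\mathcal P$ one has $(bu)\overline{(bv)}=|b|^2\,u\bar v\in L^1$ ($|b|^2\in L^1$, $u\bar v$ bounded). Take $f_0:=b$, a unit vector of $H$ lying in $b\mathcal P$. Given $f,g\in b\mathcal P$, put $\lambda:=\langle f,g\rangle_H$; then $\langle\lambda f_0,f_0\rangle_H=\lambda=\langle f,g\rangle_H$, so (P) forces $\langle f,g\rangle_2=\langle\lambda f_0,f_0\rangle_2=\lambda\|b\|_2^2$, i.e.\ $\langle f,g\rangle_2=\|b\|_2^2\langle f,g\rangle_H$ throughout $b\mathcal P$. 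Specializing to $f=bz^mw^n$ and $g=bz^{m'}w^{n'}$ and recalling the $H$-orthonormality above, the set $\{b\,z^mw^n/\|b\|_2:m,n\ge 0\}$ is orthonormal in $H^2$; hence $\varphi:=b/\|b\|_2$ is an inner function in the sense defined for $\mathbb T^2$. Since the differences $(m-m',n-n')$ over $m,n,m',n'\ge 0$ run through all of $\mathbb Z^2$, every nonzero Fourier coefficient of $|\varphi|^2\in L^1(\mathbb T^2)$ vanishes, so $|\varphi|=1$ a.e.; being in $H^2$, $\varphi$ then lies in $H^\infty$. Consequently $H=bH^2=\varphi H^2$ and $\|\varphi f\|_H=\|bf\|_H/\|b\|_2=k\|f\|_2$ with $k=1/\|b\|_2$, which is part (i). Uniqueness is standard: if $\varphi_1H^2=\varphi_2H^2$ with both inner, then $\varphi_1/\varphi_2$ and $\varphi_2/\varphi_1$ lie in $H^\infty$ and are unimodular a.e., so $\varphi_1/\varphi_2$ is a unimodular constant.

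The one genuinely delicate point is the invocation of (P) when $p<2$: there $\langle\cdot,\cdot\rangle_2$ is not defined on all of $H$, and one must first restrict to the dense subspace $b\mathcal P$ (equivalently, to finite $b$-combinations of monomials) on which the relevant integrals converge before (P) can be applied; the rest is a routine packaging of Theorem \ref{orig} with the elementary fact that a square-integrable function on the torus having constant modulus $1$ is a bounded inner function.
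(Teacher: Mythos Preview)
Your proof is correct and follows essentially the same route as the paper: invoke Theorem~\ref{orig} to obtain $H=\varphi H^2$ with $\varphi$ a unit vector generating the one-dimensional wandering subspace, use (P) to pass from $H$-orthonormality of $\{z^m w^n\varphi\}$ to $H^2$-orthonormality (after normalizing by $\|\varphi\|_2$), and conclude that the normalized generator is inner. You are more explicit than the paper about the integrability of $\langle\cdot,\cdot\rangle_2$ when $p<2$, about deducing $|\varphi|=1$ a.e.\ (hence $\varphi\in H^\infty$), and about uniqueness, but these are refinements of the same argument rather than a different approach.
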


\begin{proof}
{\bf The case $1\le p\le 2$}. Since $H$ satisfies the conditions of Theorem \ref{orig} we conclude that $H=\varphi H^2$ for some $\varphi\in H^{\frac{2p}{2-p}}$, and $\|\varphi f\|_M=\|f\|_2$ for all $f\in H^2$. Referring to the proof of Theorem \ref{orig} as given in \cite{red3}, we see that because $S_z$ and $S_w$ are doubly commuting shifts on $H$, the following Wold type decomposition is valid for $H$:
\begin{equation}\label{slocinski}
H=\sum\limits_{m=0}^\infty \sum\limits_{n=0}^\infty \oplus S_z^m S_w^n (N).
\end{equation}
Here $N$ is the wandering subspace $(H\ominus S_z(H))\cap(H\ominus S_w(H))$ and it turns out to be a one dimensional subspace generated by the unit vector $\varphi\in H$. Let us take $b:=\dfrac{\varphi}{\|\varphi\|_2}$.
From equation (\ref{slocinski}), it follows that the set
$$\left\{S_z^m S_w^n \varphi : m,n=0,1,2,\ldots \right\}$$
is orthonormal in $H$.

Now in view of property (P) satisfied by the inner product on $H$, we infer that the set 
$$\left\{S_z^m S_w^n b : m,n=0,1,2,\ldots \right\}$$
is orthonormal in $H^2$. This proves that $b$ is inner and $H=\varphi H^2=b H^2$. Also note that $\|b f\|_H=k \|f\|_2$ for all $f\in H^2$, where $k=\frac{1}{\|\varphi\|_2}$.

{\bf The case $p>2$}. Here $H=[0]$ is concluded directly from Theorem \ref{orig}.
\end{proof}

\end{document}